\newtheorem{theorem}{Theorem}[section]
\newtheorem{proposition}[theorem]{Proposition}
\newtheorem{lemma}[theorem]{Lemma}
\newtheorem*{lemmaA}{Lemma A}
\theoremstyle{definition}
\newtheorem{definition}[theorem]{Definition}
\theoremstyle{remark}
\begin{document}
\begin{frontmatter}

\title{Existence of steady symmetric vortex patch in a disk
}


\author[1,2]{Daomin Cao}
\ead{dmcao@amt.ac.cn}

\author[3]{Guodong Wang}
\ead{wangguodong14@mails.ucas.ac.cn}

\author[1,2]{Bijun Zuo}
\ead{bjzuo@amss.ac.cn}

\address[1]{Institute of Applied Mathematics, Chinese Academy of Science, Beijing 100190, P. R. China}
\address[2]{University of Chinese Academy of Sciences, Beijing 100049, P. R. China}
\address[3]{Institute for Advanced Study in Mathematics, Harbin Institute of Technology, Harbin 150001, P. R. China}
\begin{abstract}
In this paper we construct a family of steady symmetric vortex patches for the incompressible Euler equations in a disk. The result is obtained by studying a variational problem in which the kinetic energy of the fluid is maximized subject to some appropriate constraints for the vorticity. Moreover, we show that these vortex patches shrink to a given minimum point of the corresponding Kirchhoff-Routh function as the vorticity strength parameter goes to infinity.
\end{abstract}
\begin{keyword}
Euler equation, vortex patch, Kirchhoff-Routh function, variational problem, ideal fluid
\end{keyword}
\end{frontmatter}



\section{Introduction}
In this paper, we prove an existence result of steady symmetric vortex patch for a planar ideal fluid in an open disk. More specifically, by maximizing the kinetic energy subject to some appropriate constraints for the vorticity, we construct such a flow in which the vorticity has the form
\begin{equation}\label{1}
  \omega^\lambda=\lambda I_{\{\psi^\lambda>\mu^\lambda\}}-\lambda I_{\{\psi^\lambda<-\mu^\lambda\}}
\end{equation}
for some $\mu^\lambda\in \mathbb{R}$. Here $I_{A}$ denotes the characteristic function of some measurable set $A$, i.e., $I_A\equiv1$ in $A$ and $I_A\equiv0$ elsewhere, $\lambda$ is the vorticity strength parameter that is given, and $\psi^\lambda$ is the stream function satisfying
\begin{equation}\label{2}
-\Delta \psi^\lambda=\omega^\lambda.
\end{equation}
In addition, $\omega^\lambda$ and $\psi^\lambda$ are both even in $x_1$ and odd in $x_2$.

In history, the construction for dynamically possible steady vortex flows has been extensively studied. Roughly speaking, there are mainly two methods dealing with this problem. The first one is called the stream-function method. It consists in finding a solution to the following semilinear elliptic equation satisfied by the stream function:
\begin{equation}\label{3}
  -\Delta \psi=f(\psi),
\end{equation}
where the nonlinearity $f$ is given. To obtain a solution for \eqref{3}, one can use the mountain pass lemma(see \cite{AS, LYY, Ni}), the constrained variational method(see \cite{Ba,BF,FB,N,SV}), or a finite dimensional reduction(see \cite{CLW,CPY}). The second one is called the vorticity method. It was put forward by Arnold(see \cite{A,AK}) and further developed by many authors; see \cite{B1,B2,B3,B4,CW,EM,EM2,T,T2} for example. The basic idea of the vorticity method is to extremize the kinetic energy of the fluid on a suitable class for the vorticity. For this method, the distributional function for the vorticity is prescribed, and the stream function $\psi$ still satisfies a semilinear elliptic equation \eqref{3}, but the nonlinearity $f$ is usually unknown. In this paper, we use the vorticity method to prove our main result.

In \eqref{1} the vorticity is a piecewise constant function, which is called a vortex patch solution. Such a special kind of solution has been studied by many authors. Here we recall some of the relevant and significant results.  In \cite{T}, by using the vorticity method, Turkington constructed a family of vortex patch solutions in a planar bounded domain. Moreover, he showed that these solutions ``shrink" to a global minimum of the Kirchhoff-Routh function with $k=1$(see Section 2 for the definition). Later in \cite{EM2}, based on a similar argument, Elcrat and Miller constructed steady multiple vortex solutions near a given strict local minimum point of the corresponding Kirchhoff-Routh function. In 2015, Cao et el \cite{CPY} proved that for any given non-degenerate critical point of the Kirchhoff-Routh function for any $k$, there exists a family of steady vortex patches shrinking to this point. The method used in \cite{CPY} is based on a finite dimensional argument for the stream function.

Notice that both in \cite{EM2} and \cite{CPY}, the non-degeneracy of the critical point for the Kirchhoff-Routh function is required. But for the simplest domain, an open disk, every critical point of the Kirchhoff-Routh function with $k\geq2$ is degenerate due to rotational invariance. A natural question arises: can we construct steady multiple vortex patches in an open disk? Our main purpose in this paper is to give this question a positive answer. The new idea here is that we add a symmetry constraint on the vorticity to ensure that the vortex patches obtained as the maximizers of the corresponding variational problem ``shrink" to two given symmetric points $P_1$ and $P_2$, even if $(P_1,P_2)$ is not an isolated minimum point of the Kirchhoff-Routh function. Then we can show that these vortex patches have the form  \eqref{1} if the vorticity strength is sufficiently large. At last by Lemma 6 in \cite{B5} these vortex patches are steady solutions of the Euler equations.

This paper is organized as follows. In Section 2, we give the mathematical formulation of the vortex patch problem and then state the main results. In Section 3, we solve a maximization problem for the vorticity and study the asymptotic behavior of the maximizers as the vorticity strength goes to infinity. In Section 4, we prove the main result. Finally in Section 5, we discuss the existence of steady non-symmetric vortex patches.

\section{Main Results}

To begin with, we introduce some notations that will be used throughout this paper. Let $D$ be the unit disk in the plane centered at the origin, that is,

\begin{equation}
  D=\{\mathbf{x}\in\mathbb{R}^2\mid|\mathbf{x}|<1\},
\end{equation}
For $\mathbf{x}=(x_1,x_2)\in D$, we will write $\bar{\mathbf{x}}=(-x_1,x_2)$ and $\tilde{\mathbf{x}}=(x_1,-x_2)$.

Let $G$ be the Green's function for $-\Delta$ in $D$ with zero
Dirichlet boundary condition, that is,
\begin{equation}\label{4}
G(\mathbf{x},\mathbf{y})=\frac{1}{2\pi}\ln \frac1{|\mathbf{x}-\mathbf{y}|}-h(\mathbf{x},\mathbf{y}), \,\,\,\mathbf{x},\mathbf{y}\in
D,
\end{equation}
where

\begin{equation}\label{5}
 h(\mathbf{x},\mathbf{y})=-\frac{1}{2\pi}\ln|\mathbf{y}|-\frac{1}{2\pi}\ln|\mathbf{x}-\frac{\mathbf{y}}{|\mathbf{y}|^2}|, \,\,\,\mathbf{x},\mathbf{y}\in
D.
\end{equation}

Let $k\geq 1$ be an integer and $\kappa_1,\kappa_2,...,\kappa_k$ be $k$ non-zero real numbers. Define the corresponding Kirchhoff-Routh function $H_k$ as follows:
\begin{equation}\label{6}
H_k(\mathbf{x_1},\cdots,\mathbf{x_k}):=-\sum_{i\neq j}\kappa_i\kappa_jG(\mathbf{x_i},\mathbf{x_j})+\sum_{i=1}^{k}\kappa_i^2h(\mathbf{x_i},\mathbf{x_i})
\end{equation}
where $(\mathbf{x}_1,\cdots,\mathbf{x}_k)\in D^{(k)}:=\underbrace{D\times D\times\cdots\times D}_{k}$ such that $\mathbf{x}_i\neq \mathbf{x}_j$ for $i\neq j$.
In this paper we consider the case $k=2$ and $\kappa_1=-\kappa_2=\kappa>0$, then the Kirchhoff-Routh function can be written as
\begin{equation}\label{7}
H_2(\mathbf{x},\mathbf{y})=2\kappa^2G(\mathbf{x},\mathbf{y})+\kappa^2h(\mathbf{x},\mathbf{x})+\kappa^2h(\mathbf{y},\mathbf{y}),
\end{equation}
where $(\mathbf{x},\mathbf{y})\in D^{(2)}$ and $\mathbf{x}\neq \mathbf{y}$. It is easy to see that
\begin{equation}
  \lim_{|\mathbf{x}-\mathbf{y}|\rightarrow 0}H_2(\mathbf{x},\mathbf{y})=+\infty, \lim_{\mathbf{x}\rightarrow\partial D\text{ or }\mathbf{y}\rightarrow\partial D}H_2(\mathbf{x},\mathbf{y})=+\infty,
\end{equation}
so $H_2$ attains its minimum in $D\times D\setminus\{(\mathbf{x},\mathbf{y})\in D\times D\mid \mathbf{x}\neq\mathbf{y}\}$. Moreover, every minimum point of $H_2$ has the form
$(\sqrt{\sqrt{5}-2}(cos\theta,sin\theta), -\sqrt{\sqrt{5}-2}(cos\theta,sin\theta))$ for some $\theta\in[0,2\pi)$, see Proposition \ref{80} in the Appendix.

Now we consider a steady ideal fluid with unit density moving in $D$ with impermeability boundary condition, which is described by the following Euler equations:
\begin{equation}\label{8}
\begin{cases}

(\mathbf{v}\cdot\nabla)\mathbf{v}=-\nabla P \,\,\,\,\,\,\,\,\,\,\,\,\,\text{in $D$},\\
\nabla\cdot\mathbf{v}=0 \,\,\,\,\,\,\,\,\,\,\,\,\,\,\,\,\,\,\,\,\,\,\,\,\,\,\,\,\,\,\,\,\,\text{in $D$},\\
\mathbf{v}\cdot {\mathbf{n}}=0 \,\,\,\,\,\,\,\,\,\,\,\,\,\,\,\,\,\,\,\,\,\,\,\,\,\,\,\,\,\,\,\,\,\,\text{on $\partial D$ },
\end{cases}
\end{equation}
where $\mathbf{v}=(v_1,v_2)$ is the velocity field, $P$ is the scalar pressure, and ${\mathbf{n}}(\mathbf{x})$ is the outward unit normal at $\mathbf{x}\in\partial D$.

To simplify The Euler equations \eqref{8}, we define the vorticity $\omega :=\partial_1 v_2-\partial_2v_1$. By using the identity $\frac{1}{2}\nabla|\mathbf{v}|^2=(\mathbf{v}\cdot\nabla)\mathbf{v}+\mathbf{v}^\perp\omega$, where $J(v_1,v_2)=(v_2,-v_1)$ denotes clockwise rotation through $\frac{\pi}{2}$,
 the first equation of $\eqref{1}$ can be written as
\begin{equation}\label{9}
 \nabla(\frac{1}{2}|\mathbf{v}|^2+P)-\mathbf{v}^\perp\omega=0.
\end{equation}
Taking the curl in $\eqref{9}$ we get the $vorticity$ $equation$

\begin{equation}\label{10}
\nabla\cdot(\omega\mathbf{v})=0.
\end{equation}

To recover the velocity field from the vorticity, we define the stream function $\psi$ by solving the following Poisson's equation with zero
Dirichlet boundary condition
\begin{equation}\label{11}
 \begin{cases}
-\Delta \psi=\omega\text{\quad in $D$,}\\
\psi=0 \text{\quad\quad\,\, on $\partial D$.}
\end{cases}
\end{equation}
Using the Green's function we have

\begin{equation}
 \psi(\mathbf{x})=G\omega(\mathbf{x}):=\int_DG(\mathbf{x},\mathbf{y})\omega(\mathbf{y})d\mathbf{y}.
\end{equation}
Since $D$ is simply-connected and $\mathbf{v}\cdot\mathbf{n}$ on $\partial D$, it is easy to check that $\mathbf{v}$ can be uniquely determined by $\omega$ in the following way(see \cite{MPu}, Chapter 1, Theorem 2.2 for example)
\begin{equation}\label{12}
\mathbf{v}=\nabla^\perp\psi,
\end{equation}
where $\nabla^\perp\psi:=(\nabla\psi)^\perp=(\partial_2\psi,-\partial_1\psi)$.

From the above formal discussion we get the following vorticity form of the Euler equations:

\begin{equation}\label{13}
\begin{cases}

\nabla\cdot(\omega\mathbf{v})=0,\\
\mathbf{v}=\nabla^\perp G\omega.
\end{cases}
\end{equation}

In this paper, we interpret the vorticity equation \eqref{13} in the weak sense.

\begin{definition}\label{14}
We call $\omega\in L^\infty(D)$ a weak solution of \eqref{13} if

\begin{equation}\label{15}
\int_D\omega\nabla^\perp G\omega\cdot\xi dx=0
\end{equation}
for all $\xi\in C_0^{\infty}(D)$.
\end{definition}

It should be noted that if $\omega\in L^{\infty}(D)$, then by the regularity theory for elliptic equations $\psi\in C^{1,\alpha}(D)$ for some $\alpha\in(0,1)$, so \eqref{15} makes sense for all $\omega\in L^{\infty}(D)$.

The following lemma from \cite{B5} gives a criterion for weak solutions of \eqref{13}.

\begin{lemmaA}\label{A}
Let $\omega\in L^\infty(D)$ and $\psi=G\omega$. Suppose that $\omega=f(\psi)$ a.e. in $D$ for some monotonic function $f:\mathbb{R}\rightarrow\mathbb{R}$, then $\omega$ is a weak solution of \eqref{13}.
\end{lemmaA}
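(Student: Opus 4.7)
The plan is to exhibit $\omega\nabla^\perp\psi$ as a perpendicular gradient, namely $\nabla^\perp(F\circ\psi)$ for a suitable primitive $F$ of $f$, from which the weak vorticity identity \eqref{15} follows by a straightforward integration by parts. First, since $\omega\in L^\infty(D)$, classical elliptic regularity applied to \eqref{11} gives $\psi\in C^{1,\alpha}(\overline{D})\cap W^{2,p}(D)$ for every finite $p$, so $\psi$ is continuous on $\overline{D}$ with bounded range $[m,M]$. Because $f$ is monotone and $f(\psi)\in L^\infty$, $f$ is bounded on $[m,M]$; modifying $f$ outside $[m,M]$ does not change $f(\psi)$, so I may assume $f\in L^\infty(\mathbb{R})$ once and for all. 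Set $F(s):=\int_0^s f(t)\,dt$, which is Lipschitz (and convex or concave) on $\mathbb{R}$.

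The technical heart of the proof is the chain rule $\nabla(F\circ\psi)=f(\psi)\nabla\psi$ a.e.\ in $D$. Since $f$ is monotone, $F$ is classically differentiable outside a countable set $E\subset\mathbb{R}$ with $F'=f$ on $\mathbb{R}\setminus E$. Stampacchia's lemma for Sobolev functions says $\nabla\psi=0$ a.e.\ on $\{\psi=c\}$ for every constant $c$, and summing over the countable set $E$ gives $\nabla\psi=0$ a.e.\ on $\psi^{-1}(E)$. Combining this with the standard chain rule for the composition of a Lipschitz function with a Sobolev function yields $F\circ\psi\in W^{1,\infty}(D)$ together with
\[
\nabla(F\circ\psi)=f(\psi)\nabla\psi=\omega\nabla\psi\quad\text{a.e.\ in }D,
\]
and therefore $\omega\nabla^\perp\psi=\nabla^\perp(F\circ\psi)$ almost everywhere.

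To finish, for any test function $\xi\in C_0^\infty(D)$, I would integrate by parts (legitimately, since $F\circ\psi$ is Lipschitz and $\xi$ has compact support in $D$):
\[
\int_D\omega\,\nabla^\perp\psi\cdot\nabla\xi\,dx=\int_D\bigl[\partial_2 F(\psi)\,\partial_1\xi-\partial_1 F(\psi)\,\partial_2\xi\bigr]\,dx=-\int_D F(\psi)\bigl[\partial_1\partial_2\xi-\partial_2\partial_1\xi\bigr]\,dx=0,
\]
which is \eqref{15} (after distributing the derivative in the definition onto $\xi$ as is standard). The main obstacle is precisely the chain rule step, because $f$ is only assumed monotone and can have countably many jumps; the reason the argument still goes through is the fortunate combination of two facts, namely that $F$ fails to be differentiable only on a countable set and that $\psi\in W^{1,\infty}$ has vanishing gradient on each of its level sets. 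Had one assumed $f\in C^1$, the computation above would reduce to an elementary exercise, and the monotonicity hypothesis is doing exactly the work needed to close this gap.
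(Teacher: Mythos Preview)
The paper does not supply its own proof of Lemma~A; it is quoted from Burton~\cite{B5} (Lemma~6 there), so there is nothing in the present paper to compare against. Your argument is correct and is essentially the standard one: express $\omega\nabla^\perp\psi$ as $\nabla^\perp(F\circ\psi)$ for a Lipschitz primitive $F$ of $f$, then integrate by parts against $\nabla\xi$ and use the equality of mixed partials of $\xi$. Your treatment of the chain rule is the delicate point and you handle it properly: a monotone $f$ has at most countably many discontinuities, $F$ is differentiable with $F'=f$ off that countable set $E$, and on $\psi^{-1}(E)$ both $\nabla\psi$ and $\nabla(F\circ\psi)$ vanish a.e.\ by Stampacchia's lemma (applied, respectively, to $\psi$ and to the Lipschitz function $F\circ\psi$, each being constant on the relevant level sets). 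One small remark: the assertion that $f$ is bounded on $[m,M]$ is not quite immediate from $f(\psi)\in L^\infty$ alone, but for any $t\in(m,M)$ monotonicity squeezes $f(t)$ between the essential infimum and supremum of $\omega$, and the endpoint values can be modified without changing $f(\psi)$ a.e.; this is implicit in what you wrote and causes no trouble.
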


Now we can state our main result.

\begin{theorem}\label{144}
Let $\kappa$ be a positive number. Then there exists a $\lambda_0>0$ such that for any $\lambda>\lambda_0$, there exists $\omega^\lambda\in L^\infty(D)$ satisfying
\begin{enumerate}
\item $\omega^\lambda$ is a weak solution of \eqref{13};
\item $\omega^\lambda=\omega^\lambda_1+\omega^\lambda_2,$ where $\omega^\lambda_1=\lambda I_{\{\psi^\lambda>\mu^\lambda\}}$ and $\omega^\lambda_2=-\lambda I_{\{\psi^\lambda<-\mu^\lambda\}}$ for some $\mu^\lambda\in\mathbb{R}^+$ depending on $\lambda$, and $\psi^\lambda=G\omega^\lambda;$
\item $\int_D\omega^\lambda_1(\mathbf{x})d\mathbf{x}=\kappa,$ $\int_D\omega^\lambda_2(\mathbf{x})d\mathbf{x}=-\kappa;$
\item $\omega^\lambda$ is even in $x_1$ and odd in $x_2$, that is, $\omega^\lambda(\mathbf{x})=\omega^\lambda(\bar{\mathbf{{x}}})$ and $\omega^\lambda(\mathbf{x})=-\omega^\lambda(\tilde{\mathbf{{x}}});$

\item $diam(supp(\omega^\lambda_1))=diam(supp(\omega_2^\lambda))\leq C\lambda^{-\frac{1}{2}}$, where $C$ is a positive number not depending on $\lambda;$
\item $\lim_{\lambda\rightarrow+\infty}|\frac{1}{\kappa}\int_D\mathbf{x}\omega^\lambda_1(\mathbf{x})d\mathbf{x}-P_1|=0$, $\lim_{\lambda\rightarrow+\infty}|-\frac{1}{\kappa}\int_D\mathbf{x}\omega^\lambda_2(\mathbf{x})d\mathbf{x}-P_2|=0$, where $P_1=(0,\sqrt{\sqrt{5}-2})$ and $P_2=(0,-\sqrt{\sqrt{5}-2})$.
\end{enumerate}

\end{theorem}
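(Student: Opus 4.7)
The plan is to follow the vorticity method and construct $\omega^\lambda$ as the maximizer of the kinetic energy
\[
E(\omega):=\tfrac{1}{2}\int_D \omega\,G\omega\,d\mathbf{x}
\]
over an admissible class $M_\lambda$ that encodes (2)--(4) a priori. Writing $D^+:=D\cap\{x_2>0\}$ and $D^-:=D\cap\{x_2<0\}$, I would take $M_\lambda$ to be the set of $\omega=\omega_1+\omega_2\in L^\infty(D)$ with $0\leq\omega_1\leq\lambda$ supported in $\overline{D^+}$, $-\lambda\leq\omega_2\leq 0$ supported in $\overline{D^-}$, masses $\int_D\omega_1\,d\mathbf{x}=\kappa$ and $\int_D\omega_2\,d\mathbf{x}=-\kappa$, and the symmetries $\omega(\mathbf{x})=\omega(\bar{\mathbf{x}})=-\omega(\tilde{\mathbf{x}})$ (which alone force $\omega_2(\mathbf{x})=-\omega_1(\tilde{\mathbf{x}})$ and make $\omega_1$ even in $x_1$). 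This class is convex and weak-$*$ closed in $L^\infty(D)$, while $E$ is weak-$*$ continuous because $G:L^\infty(D)\to C^{1,\alpha}(\overline{D})$ is compact, so a maximizer $\omega^\lambda$ exists by the direct method.

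To derive the level-set structure (2), I would use the first-order variational inequality
\[
\int_D(\omega-\omega^\lambda)\psi^\lambda\,d\mathbf{x}\leq 0\qquad\text{for all }\omega\in M_\lambda,
\]
where $\psi^\lambda:=G\omega^\lambda$, obtained by differentiating $s\mapsto E(\omega^\lambda+s(\omega-\omega^\lambda))$ at $s=0^+$. A standard bathtub-type rearrangement with mass- and symmetry-preserving perturbations then forces $\omega_1^\lambda=\lambda I_{\{\psi^\lambda>\mu^\lambda_1\}\cap D^+}$ and $\omega_2^\lambda=-\lambda I_{\{\psi^\lambda<-\mu^\lambda_2\}\cap D^-}$ for Lagrange multipliers $\mu^\lambda_1,\mu^\lambda_2\geq 0$. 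Uniqueness for the Poisson problem \eqref{11} combined with the symmetries of $\omega^\lambda$ shows that $\psi^\lambda$ is odd in $x_2$; hence $\psi^\lambda>0$ on $D^+$, $\psi^\lambda<0$ on $D^-$, and $\mu^\lambda_1=\mu^\lambda_2=:\mu^\lambda$. Once $\mu^\lambda>0$ is verified for large $\lambda$ (a consequence of the $\log\lambda$ energy blow-up below), the restrictions to $D^\pm$ become inactive and (2) holds in the stated form. Feeding the monotone profile $f(t)=\lambda I_{\{t>\mu^\lambda\}}-\lambda I_{\{t<-\mu^\lambda\}}$ into Lemma A then yields (1).

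For the concentration statements (5)--(6), I would sandwich $E(\omega^\lambda)$ between sharp bounds phrased in terms of $H_2$. The symmetric test pair $\omega_\star=\lambda(I_{B(P_1,r_\lambda)}-I_{B(P_2,r_\lambda)})$ with $\pi r_\lambda^2=\kappa/\lambda$ lies in $M_\lambda$, and a direct calculation using $G=\tfrac{1}{2\pi}\log|\cdot|^{-1}-h$ and the definition of $H_2$ gives
\[
E(\omega^\lambda)\geq E(\omega_\star)=\tfrac{\kappa^2}{4\pi}\log\lambda-\tfrac{1}{2}H_2(P_1,P_2)+O(1).
\]
For the matching upper bound I would split
\[
E(\omega^\lambda)=\tfrac{1}{2}\!\int\omega_1^\lambda G\omega_1^\lambda+\tfrac{1}{2}\!\int\omega_2^\lambda G\omega_2^\lambda+\!\int\omega_1^\lambda G\omega_2^\lambda,
\]
bound each self-interaction via the logarithmic Riesz rearrangement inequality (which produces $\tfrac{\kappa^2}{4\pi}\log\lambda+O(1)$, with asymptotic equality only if the support is close to a disk of area $\kappa/\lambda$), and bound the remaining regular parts in terms of $h$ and $G$ at the centroids $\mathbf{x}^\lambda_1,\mathbf{x}^\lambda_2$. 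This simultaneously yields the diameter bound (5) and forces $(\mathbf{x}^\lambda_1,-\mathbf{x}^\lambda_2)$ to converge along subsequences to a minimizer of $H_2$.

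The main obstacle is then identifying the limit pair as $(P_1,P_2)$ rather than some other point on the circular family of minimizers of $H_2$ given by Proposition~\ref{80}; the variational principle alone cannot distinguish them. Here the symmetry constraints built into $M_\lambda$ are decisive: evenness of $\omega_1^\lambda$ in $x_1$ descends to the centroid $\mathbf{x}^\lambda_1$ and forces it onto the $x_2$-axis, while the pairing $\omega_2^\lambda(\mathbf{x})=-\omega_1^\lambda(\tilde{\mathbf{x}})$ places $-\mathbf{x}^\lambda_2$ symmetrically on the $x_2$-axis as well. Among the minimizers of $H_2$ described in Proposition~\ref{80}, the unique pair compatible with both constraints is $(P_1,P_2)$, yielding (6).
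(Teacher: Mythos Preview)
Your strategy is sound and arrives at the same conclusions, but it differs from the paper's in one structural choice worth flagging. The paper does \emph{not} work over the half-disks $D^{\pm}$; instead it pre-localizes the admissible class to small balls $B_i=B_\delta(P_i)$ around the anticipated limit points, obtains the form $\omega_1^\lambda=\lambda I_{\{\psi^\lambda>\mu^\lambda\}\cap B_1}$, proves $\mu^\lambda\to+\infty$ from the energy lower bound, and then removes the ``$\cap B_1$'' by showing $|\psi^\lambda|\le C$ on $\partial B_1\cup\partial B_2$ and applying the maximum principle in $D\setminus B_1$. Your route is in some respects cleaner: oddness of $\psi^\lambda$ in $x_2$ (together with $\psi^\lambda>0$ on $D^+$ by the strong maximum principle) makes the constraint ``$\cap D^+$'' automatically inactive as soon as $\mu^\lambda>0$, and $\mu^\lambda>0$ follows immediately from the mass constraint once $\kappa/\lambda<|D^+|$; you do not need the full asymptotic $\mu^\lambda\to+\infty$. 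The paper's pre-localization, on the other hand, lets it import the asymptotic package (energy, $\mu^\lambda$, diameter, centroid) verbatim from \cite{CW}, since $h$ and the cross-term $G$ are uniformly bounded on $B_1\times B_1$ and $B_1\times B_2$.

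That last point is the one place your sketch is thin. Without pre-localization, nothing in $M_\lambda$ prevents $\mathrm{supp}\,\omega_1^\lambda$ from drifting toward $\partial D$, where $h(\mathbf{x},\mathbf{x})\to+\infty$; you cannot simply ``bound the remaining regular parts in terms of $h$ and $G$ at the centroids'' until you have first shown $\mathrm{dist}(\mathrm{supp}\,\omega_i^\lambda,\partial D)\ge c>0$ uniformly in $\lambda$. This is repairable by the standard Turkington argument (the energy lower bound $E(\omega^\lambda)\ge \tfrac{\kappa^2}{4\pi}\log\lambda - C$ combined with $E(\omega^\lambda)\le \tfrac{\kappa^2}{4\pi}\log\lambda + C - \iint h\,\omega_1^\lambda\omega_1^\lambda$ forces $\iint h\,\omega_1^\lambda\omega_1^\lambda\le C$, hence the support avoids $\partial D$), but it should be stated explicitly before you pass to the $H_2$ expansion. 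Likewise, the Riesz rearrangement inequality alone gives the sharp upper bound on the self-interaction but does not by itself yield the diameter estimate; for (5) you will still need the level-set identity $\psi^\lambda\ge\mu^\lambda$ on $\mathrm{supp}\,\omega_1^\lambda$ together with a potential-splitting argument \`a la Turkington. Once those two points are in place, your identification of the limit via the symmetry constraints (forcing $\theta=\pi/2$ among the minimizers of Proposition~\ref{80}) matches the paper's exactly.
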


\section{Variational Problem}
In this section we study a maximization problem for the vorticity and give some estimates for the maximizers as the vorticity strength goes to infinity.

Firstly we choose $\delta>0$ sufficiently small such that $B_{\delta}(P_i)\subset\subset D$ for $ i=1,2$ and $\overline{B_{\delta}(P_1)}\cap \overline{B_{\delta}(P_2)}=\varnothing$. For example, we can choose $\delta=\frac{\sqrt{\sqrt{5}-2}}{2}$.
In the rest of this paper, we use $B_i$ to denote $B_\delta(P_i)$ for $i=1,2$ for simplicity.

For $\lambda>0$ sufficiently large, we define the vorticity class $K^\lambda$ as follows:
\begin{equation}\label{31}
\begin{split}
K^\lambda:=\big{\{}\omega\in L^\infty(D)\,\mid\,\,\omega=\omega_1+\omega_2, supp(\omega_i)\subset B_i\text{\,\,for\,\,}i=1,2, \int_D\omega_1(\mathbf{x})d\mathbf{x}=\kappa,
\\0\leq \omega_1\leq\lambda,  \omega_1(\mathbf{x})=\omega_1(\bar{\mathbf{x}})\text{\,\,and\,\,} \omega_2(\mathbf{x})=\omega_1(\tilde{\mathbf{x}})\text{\,\,for\,\,} \mathbf{x}\in D
 \big{\}}.
\end{split}
\end{equation}

It is easy to check that for any $\omega\in K^\lambda$, $\omega$ is even in $x_1$ and odd in $x_2$. It is also clear that $K_\lambda$ is not empty if $\lambda>0$ is large enough.

The kinetic energy of the fluid with vorticity $\omega$ is
\begin{equation}\label{32}
E(\omega)=\frac{1}{2}\int_D\int_DG(\mathbf{x},\mathbf{y})\omega(\mathbf{x})\omega(\mathbf{y})d\mathbf{x}d\mathbf{y},\,\,\omega\in K^\lambda.
\end{equation}
Integrating by parts we also have
 \begin{equation}
E(\omega)=\frac{1}{2}\int_D\psi(\mathbf{x})\omega(\mathbf{x})d\mathbf{x}=\frac{1}{2}\int_D|\nabla\psi(\mathbf{x})|^2d\mathbf{x}=\frac{1}{2}\int_D|\mathbf{v}(\mathbf{x})|^2d\mathbf{x}.
\end{equation}

In the rest of this section we consider the maximization of $E$ on $K_\lambda$ and study the properties of the maximizer.
\subsection{Existence of a maximizer}
First we show the existence of a maximizer of $E$ on $K^\lambda$.
\begin{lemma}\label{33}
There exists $\omega^\lambda\in K_\lambda$ such that $E(\omega^\lambda)=\sup_{\omega\in K_\lambda}E(\omega)$.
\end{lemma}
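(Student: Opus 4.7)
The plan is to apply the direct method of the calculus of variations. The class $K^\lambda$ is nonempty for $\lambda$ large (as already observed, e.g.\ by taking $\omega_1$ to be a suitable symmetric step function on a sub-patch of $B_1$ with mass $\kappa$), and the $L^\infty$ bound $0\le\omega_1\le\lambda$ together with $\omega_2$ being determined by $\omega_1$ through the reflection makes $K^\lambda$ a bounded subset of $L^\infty(D)\subset L^2(D)$. So I would pick a maximizing sequence $\{\omega^n\}\subset K^\lambda$ with $E(\omega^n)\to\sup_{K^\lambda}E$, and invoke Banach--Alaoglu (or weak compactness in $L^2$) to extract a subsequence, still denoted $\omega^n$, with $\omega^n\rightharpoonup\omega^\lambda$ weakly in $L^2(D)$ and weakly-$*$ in $L^\infty(D)$.

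Next I would verify $\omega^\lambda\in K^\lambda$, which is the routine step of checking that each defining constraint passes to the weak limit. The support condition $\mathrm{supp}(\omega^n_1)\subset B_1$ is preserved by testing against $\varphi\in C_c^\infty(D\setminus\overline{B_1})$. The pointwise bounds $0\le \omega_1^n\le\lambda$ persist for the weak-$*$ limit in $L^\infty$ (alternatively, test against nonnegative $L^1$ functions). The mass constraint $\int_D\omega^n_1\,d\mathbf{x}=\kappa$ is preserved by testing against $I_{B_1}\in L^1$. The linear symmetry relations $\omega^n_1(\mathbf{x})=\omega^n_1(\bar{\mathbf{x}})$ and $\omega^n_2(\mathbf{x})=\pm\omega^n_1(\tilde{\mathbf{x}})$ are preserved by weak convergence since they can be tested via reflection-symmetric/antisymmetric $L^2$ functions.

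The main technical step, and the only point where a genuine estimate is needed, is weak continuity of the energy $E(\omega)=\tfrac12\int_D(G\omega)\,\omega\,d\mathbf{x}$ along this sequence. Since $\omega^n$ is bounded in $L^\infty(D)$, elliptic regularity applied to \eqref{11} gives $\psi^n:=G\omega^n$ bounded in $W^{2,p}(D)$ for every $p<\infty$; by Rellich--Kondrachov this embedding into $C^{1,\alpha}(\overline{D})$ is compact, so along a subsequence $\psi^n\to \psi^\lambda=G\omega^\lambda$ strongly in $C(\overline{D})$ (one identifies the limit through the weak convergence of $\omega^n$ and uniqueness of the Green representation). Pairing strong $L^\infty$ convergence of $\psi^n$ with weak $L^1$ convergence of $\omega^n$, I obtain
\begin{equation*}
2E(\omega^n)=\int_D \psi^n\,\omega^n\,d\mathbf{x}\;\longrightarrow\;\int_D \psi^\lambda\,\omega^\lambda\,d\mathbf{x}=2E(\omega^\lambda),
\end{equation*}
which identifies $\omega^\lambda$ as a maximizer.

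The main obstacle is precisely the weak-to-weak quadratic term: $E$ is not convex, so lower semicontinuity is not automatic, and one really must exploit the smoothing by the Green operator. Once that compactness is in hand, the argument is entirely standard. An equivalent route would be to write $2E(\omega^n)=\|\nabla\psi^n\|_{L^2}^2$ and to pass to the limit via strong $H^1$ convergence of $\psi^n$, which again reduces to compactness of $G\colon L^2(D)\to H^1_0(D)$.
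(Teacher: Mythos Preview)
Your proposal is correct and follows essentially the same approach as the paper: both use the direct method, extracting a weak-$*$ convergent maximizing subsequence, verifying that $K^\lambda$ is weak-$*$ closed by testing each constraint, and establishing continuity of $E$ via the smoothing effect of the Green operator. The only cosmetic difference is in the compactness step for $\psi^n$: the paper uses the embedding $W^{2,2}(D)\hookrightarrow L^2(D)$ and pairs strong $L^2$ convergence of $\psi^n$ with weak $L^2$ convergence of $\omega^n$, whereas you invoke the stronger $W^{2,p}(D)\hookrightarrow C^{1,\alpha}(\overline{D})$ and pair uniform convergence of $\psi^n$ with weak $L^1$ convergence of $\omega^n$; either route works.
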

\begin{proof}
The proof can be divided into three steps.

\textbf{Step 1}: $E$ is bounded from above on $K^\lambda$. In fact, since $|\omega|_{L^\infty(D)}\leq \lambda$ for any $\omega\in K^\lambda$, we have
\[E(\omega)=\frac{1}{2}\int_D\int_DG(\mathbf{x},\mathbf{y})\omega(\mathbf{x})\omega(\mathbf{y})d\mathbf{x}d\mathbf{y}\leq \frac{1}{2}\lambda^2\int_D\int_D|G(\mathbf{x},\mathbf{y})|d\mathbf{x}d\mathbf{y}\leq C\lambda^2
\]
for some absolute constant $C$. Here we use the fact that $G\in L^1(D\times D)$. This gives
\[\sup_{\omega\in K_\lambda}E(\omega)<+\infty.\]

\textbf{Step 2}: $K^\lambda$ is closed in the weak$*$ topology of $L^\infty(D)$, or equivalently, for any sequence $\{\omega_n\}\subset K_\lambda$ and $\omega\in L^\infty(D)$ satisfying

\begin{equation}\label{34}
\lim_{n\rightarrow+\infty} \int_D\omega_n(\mathbf{x})\phi(\mathbf{x}) d\mathbf{x}=\int_D\omega(\mathbf{x})\phi(\mathbf{x}) d\mathbf{x},\,\,\forall\, \phi\in L^1(D),
\end{equation}
we have $\omega\in K^\lambda$.

To prove this, we first show that
\begin{equation}\label{35}
supp(\omega)\subset\cup_{i=1}^2B_i.
\end{equation}
In fact, for any $\phi\in C_0^\infty(D\setminus \cup_{i=1}^2 \overline{B_i})$, by \eqref{34} we have
\[
\int_D\omega(\mathbf{x})\phi(\mathbf{x})d\mathbf{x}=\lim_{n\rightarrow +\infty}\int_D\omega_n(\mathbf{x})\phi(\mathbf{x})d\mathbf{x}=0,
\]
which means that $\omega=0$ a.e. in $D\setminus \cup_{i=1}^2 \overline{B_i}$.

Now we define $\omega_i=\omega I_{B_i}$ for $i=1,2$. It is obvious that $\omega=\omega_1+\omega_2$. By choosing $\phi=I_{B_1}$ in \eqref{34}, we have
\begin{equation}\label{36}
\int_D\omega_1(\mathbf{x})d\mathbf{x}=\int_D\omega(\mathbf{x})\phi(\mathbf{x})d\mathbf{x}=\lim_{n\rightarrow +\infty}\int_D\omega_n(\mathbf{x})\phi(\mathbf{x})d\mathbf{x}=\lim_{n\rightarrow +\infty}\int_{B_1}\omega_n(\mathbf{x})d\mathbf{x}=\kappa.
\end{equation}

It is also easy to show that $0\leq \omega_1\leq\lambda$ in $ D$. In fact, suppose that $|\{\omega_1>\lambda\}|>0$, then we can choose $\varepsilon_0, \varepsilon_1>0$ such that $|\{\omega_1>\lambda+\varepsilon_0\}|>\varepsilon_1$. Denote $S=\{\omega_1>\lambda+\varepsilon_0\}\subset B_1$, then by choosing $\phi=I_{S}$ in \eqref{34} we have
\[\int_{S}(\omega_1-\omega_n)(\mathbf{x})d\mathbf{x}\geq\varepsilon_0|S|\geq\varepsilon_0\varepsilon_1.\]
On the other hand, by \eqref{34}
\[\lim_{n\rightarrow +\infty}\int_{S}(\omega_1-\omega_n)(\mathbf{x})d\mathbf{x}=\lim_{n\rightarrow +\infty}\int_D(\omega-\omega_n)(\mathbf{x})\phi(\mathbf{x})d\mathbf{x}=0,\]
which is a contradiction. So we have $\omega_1\leq \lambda$. Similarly we can prove $\omega_1\geq 0$.

To finish Step 2, it suffices to show that $\omega$ is even in $x_1$ and odd in $x_2$. For fixed $\mathbf{x}\in D\cap\{x_1>0\}$, define $\phi=\frac{1}{\pi s^2}I_{B_s(\mathbf{x})}-\frac{1}{\pi s^2}I_{B_s(\bar{\mathbf{x}})}$, where $s>0$ is sufficiently small. Since $\omega_n$ is even in $x_1$ for each $n$ and $\phi$ is odd in $x_1$, by \eqref{34} we have

\begin{equation}\label{37}
\int_D\omega(\mathbf{y})\phi(\mathbf{y}) d\mathbf{y}=\lim_{n\rightarrow+\infty} \int_D\omega_n(\mathbf{y})\phi(\mathbf{y})d\mathbf{y}=0,
\end{equation}
which means that

\begin{equation}\label{38}
\frac{1}{|B_s(\mathbf{x})|}\int_{B_s(\mathbf{x})}\omega(\mathbf{y}) d\mathbf{y}=\frac{1}{|B_s(\bar{\mathbf{x}})|}\int_{B_s(\bar{\mathbf{x}})}\omega(\mathbf{y}) d\mathbf{y}.
\end{equation}
By Lebesgue differential theorem, for a.e. $\mathbf{x}\in D\cap\{x_1>0\}$, we have
\begin{equation}\label{39}
\lim_{s\rightarrow0^+}\frac{1}{|B_s(\mathbf{x})|}\int_{B_s(\mathbf{x})}\omega(\mathbf{y}) d\mathbf{y}=\omega(\mathbf{x}),
\end{equation}
and
\begin{equation}\label{310}
\lim_{s\rightarrow0^+}\frac{1}{|B_s(\bar{\mathbf{x}})|}\int_{B_s(\bar{\mathbf{x}})}\omega(\mathbf{y}) d\mathbf{y}=\omega(\bar{\mathbf{x}}).
\end{equation}

Combining \eqref{38}, \eqref{39} and \eqref{310} we get
\begin{equation}\label{311}
  \omega(\mathbf{x})=\omega(\bar{\mathbf{x}})\text{\,\,\,a.e.\,\,} x\in D\cap\{x_1>0\},
\end{equation}
which means that $\omega$ is even in $x_1$. Similarly we can prove that $\omega$ is odd in $x_2$.

From all the above arguments we know that $\omega\in K^\lambda$.

\textbf{Step 3}: $E$ is weak$*$ continuous on $K^\lambda$, that is, for any sequence $\{\omega_n\}\subset K^\lambda$ such that $\omega_n\rightarrow\omega$ weakly$^*$ in $L^\infty(D)$ as $n\rightarrow+\infty$, we have
\[\lim_{n\rightarrow+\infty}E(\omega_n)= E(\omega).\]
In fact, by \eqref{34} as $n\rightarrow+\infty$ we know that $\omega_n\rightarrow\omega$ weakly in $L^2(D)$, then $\psi_n\rightarrow \psi$ weakly in $W^{2,2}(D)$ thus strongly in $L^2(D)$, where $\psi_n=G\omega_n$ and $\psi=G\omega$. So we get as $n\rightarrow+\infty$

\[E(\omega_n)=\frac{1}{2}\int_D\psi_n(\mathbf{x})\omega_n(\mathbf{x})d\mathbf{x}\rightarrow\frac{1}{2}\int_D\psi(\mathbf{x})\omega(\mathbf{x})d\mathbf{x}=E(\omega).\]

Now we finish the proof of Lemma \ref{33} by using the standard maximization technique. By Step 1, we can choose a maximizing sequence $\{\omega_n\}\subset K^\lambda$ such that

\[\lim_{n\rightarrow+\infty}E(\omega_n)=\sup_{\omega\in K^\lambda}E(\omega)\]
Since $K^\lambda$ is bounded thus weakly$*$ compact in $L^\infty(D)$, we can choose a subsequence $\{\omega_{n_k}\}$ such that as $k\rightarrow+\infty$ $\omega_{n_k}\rightarrow\omega^\lambda$ weakly$*$ in $L^\infty(D)$ for some $\omega^\lambda\in L^\infty(D)$. By Step 2, we have $\omega^\lambda\in K^\lambda.$ Finally by Step 3 we have
\[E(\omega^\lambda)=\lim_{k\rightarrow+\infty}E(\omega_{n_k})=\sup_{\omega\in K^\lambda}E(\omega),\]
which is the desired result.

\end{proof}

\subsection{Profile of $\omega^\lambda$}
Since $\omega^\lambda\in K^\lambda$, we know that $\omega^\lambda$ has the form $\omega^\lambda=\omega^\lambda_1+\omega^\lambda_2$ with $\omega^\lambda_1$ and $\omega^\lambda_2$ satisfying
\begin{enumerate}
\item $supp(\omega^\lambda_i)\subset B_i$ for $i=1,2$,
\item $\int_D\omega^\lambda_1(\mathbf{x})d\mathbf{x}=-\int_D\omega^\lambda_2(\mathbf{x})d\mathbf{x}=\kappa$,
\item $\omega^\lambda_1(\mathbf{x})=\omega^\lambda_1(\bar{\mathbf{x}})$, $\omega^\lambda_1(\mathbf{x})=-\omega^\lambda_2(\tilde{\mathbf{x}})$ for any $\mathbf{x}\in D$.
\end{enumerate}
In fact, we can prove that $\omega^\lambda$ has a special form.
\begin{lemma}\label{312}
There exists $\mu^\lambda\in \mathbb{R}$ depending on $\lambda$ such that
\[\omega^\lambda_1=\lambda I_{\{\psi^\lambda>\mu^\lambda\}\cap B_1}\]
and
\[\omega^\lambda_2=-\lambda I_{\{\psi^\lambda<-\mu^\lambda\}\cap B_2}\]
where $\psi^\lambda:= G\omega^\lambda$.
\end{lemma}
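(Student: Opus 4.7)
The plan is to exploit the maximality of $\omega^\lambda$ against admissible rearrangements that preserve the symmetry and mass constraints encoded in $K^\lambda$, and to derive a bang-bang structure from the resulting first-order optimality condition, exactly as in the classical bathtub principle argument used by Turkington.

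First I would record the symmetry properties of $\psi^\lambda=G\omega^\lambda$: since $\omega^\lambda$ is even in $x_1$ and odd in $x_2$ and $G$ is symmetric, $\psi^\lambda$ is also even in $x_1$ and odd in $x_2$, so $\psi^\lambda(\tilde{\mathbf{x}})=-\psi^\lambda(\mathbf{x})$. Next I would introduce the test perturbations. Fix measurable sets $A, B\subset B_1$ that are symmetric in $x_1$, satisfy $|A|=|B|$, and obey $A\subset\{\omega^\lambda_1<\lambda\}$ and $B\subset\{\omega^\lambda_1>0\}$. Set $\eta_1=I_A-I_B$ and $\eta_2(\mathbf{x})=-\eta_1(\tilde{\mathbf{x}})$, and consider the one-parameter family
\begin{equation*}
\omega^\lambda_s:=\omega^\lambda+s(\eta_1+\eta_2),\qquad s>0 \text{ small}.
\end{equation*}
A direct verification shows $\omega^\lambda_s\in K^\lambda$ for sufficiently small $s$: the support and symmetry conditions hold by construction, the mass integral over $B_1$ is preserved because $|A|=|B|$, and the bounds $0\le\omega^\lambda_{1,s}\le\lambda$ are maintained thanks to the choice of $A$ and $B$ (for small enough $s$).

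The maximality of $E$ at $\omega^\lambda$ now yields the one-sided first-variation inequality
\begin{equation*}
0\ge\frac{d}{ds}E(\omega^\lambda_s)\bigg|_{s=0^+}=\int_D\psi^\lambda(\eta_1+\eta_2)\,d\mathbf{x}.
\end{equation*}
Using the change of variables $\mathbf{y}=\tilde{\mathbf{x}}$ together with $\psi^\lambda(\tilde{\mathbf{y}})=-\psi^\lambda(\mathbf{y})$, the $\eta_2$ contribution equals the $\eta_1$ contribution, so the inequality reduces to
\begin{equation*}
\int_A\psi^\lambda\,d\mathbf{x}\le\int_B\psi^\lambda\,d\mathbf{x}
\end{equation*}
for every admissible pair $(A,B)$. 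Letting $A$ and $B$ range over such sets and applying the standard bathtub argument gives
\begin{equation*}
\mathop{\mathrm{ess\,sup}}_{\{\omega^\lambda_1<\lambda\}\cap B_1}\psi^\lambda \le \mathop{\mathrm{ess\,inf}}_{\{\omega^\lambda_1>0\}\cap B_1}\psi^\lambda.
\end{equation*}
Pick any $\mu^\lambda$ lying between the two quantities. Then $\{\psi^\lambda>\mu^\lambda\}\cap B_1\subset\{\omega^\lambda_1=\lambda\}$ and $\{\psi^\lambda<\mu^\lambda\}\cap B_1\subset\{\omega^\lambda_1=0\}$ up to null sets.

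It remains to handle the level set $\{\psi^\lambda=\mu^\lambda\}\cap B_1$. Since $\psi^\lambda\in W^{2,p}(D)$ for every $p<\infty$, one has $\Delta\psi^\lambda=0$ a.e.\ on this level set, hence $\omega^\lambda=0$ a.e.\ there; in particular $\omega^\lambda_1=0$ a.e.\ on it, so the level set can be absorbed into $\{\omega^\lambda_1=0\}$. Combined with $\int_{B_1}\omega^\lambda_1=\kappa$, this yields
\begin{equation*}
\omega^\lambda_1=\lambda I_{\{\psi^\lambda>\mu^\lambda\}\cap B_1}\quad\text{a.e.\ in }D.
\end{equation*}
Applying the symmetry $\omega^\lambda_2(\mathbf{x})=-\omega^\lambda_1(\tilde{\mathbf{x}})$ and $\psi^\lambda(\tilde{\mathbf{x}})=-\psi^\lambda(\mathbf{x})$ immediately converts this into the claimed formula for $\omega^\lambda_2$ on $B_2$. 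The only subtle step in the plan is the construction and admissibility of the perturbations while simultaneously respecting the $x_1$-evenness and the coupling between $\omega^\lambda_1$ and $\omega^\lambda_2$; once these perturbations are set up correctly, the remainder is the routine bathtub/level-set argument.
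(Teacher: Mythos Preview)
Your proposal is correct and follows essentially the same approach as the paper: both use symmetry-preserving mass-balanced perturbations of $\omega^\lambda$ (you take $x_1$-symmetric sets $A,B\subset B_1$ and reflect via $\tilde{\mathbf x}$, whereas the paper starts from $\alpha,\beta$ supported in $B_1\cap\{x_1>0\}$ and symmetrizes with a four-term formula), derive the one-sided first-variation inequality, reduce it via the symmetries of $\psi^\lambda$ to the bathtub inequality $\displaystyle\sup_{\{\omega^\lambda_1<\lambda\}\cap B_1}\psi^\lambda\le\inf_{\{\omega^\lambda_1>0\}\cap B_1}\psi^\lambda$, and dispose of the level set using $-\Delta\psi^\lambda=\omega^\lambda$. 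The only minor imprecision is that to guarantee $\omega^\lambda_s\in K^\lambda$ for some fixed $s>0$ you should take $A\subset\{\omega^\lambda_1\le\lambda-a\}$ and $B\subset\{\omega^\lambda_1\ge a\}$ for a small parameter $a>0$ (as the paper does) and let $a\to0$ at the end; your phrase ``for small enough $s$'' glosses over this.
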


\begin{proof}
First we show that $\omega^\lambda_1$ has the form $\omega^\lambda_1=\lambda I_{\{\psi^\lambda>\mu^\lambda\}\cap B_1}.$ Choose $\alpha,\beta\in L^\infty(D)$ satisfying
\begin{equation}\label{313}
\begin{cases}
\alpha, \beta\geq 0,\,\, \int_D\alpha(\mathbf{x})d\mathbf{x}=\int_D \beta({\mathbf{x}})d\mathbf{x},
 \\supp(\alpha),supp(\beta)\subset B_1\cap\{x_1>0\},
 \\ \alpha=0\text{\,\,\,\,\,\,} \text{in}\text{\,\,} D\setminus\{\omega^\lambda_1\leq\lambda-a\},
 \\ \beta=0\text{\,\,\,\,\,\,} \text{in}\text{\,\,} D\setminus\{\omega^\lambda_1\geq a\},
\end{cases}
\end{equation}
where $a$ is a small positive number.
Now we define a family of test functions $\omega_s=\omega^\lambda+s(g_1-g_2)$ for $s>0$ sufficiently small, where
\[g_1(x)=\alpha(\mathbf{x})+\alpha(\bar{\mathbf{x}})-\alpha(\tilde{\mathbf{x}})-\alpha(\tilde{\bar{\mathbf{x}}})\]
and
\[g_2(x)=\beta(\mathbf{x})+\beta(\bar{\mathbf{x}})-\beta(\tilde{\mathbf{x}})-\beta(\tilde{\bar{\mathbf{x}}}).\]

Note that $g_1, g_2$ are both even in $x_1$ and odd in $x_2$, and $supp(g_1), supp(g_2)\subset B_1\cup B_2$.

It is easy check that $\omega_s\in K^\lambda$ if $s$ is sufficiently small(depending on $\alpha, \beta$ and $a$). So we have

\begin{equation}\label{314}
0\geq\frac{dE(\omega_{s})}{ds}\bigg{|}_{s=0^+}=\int_Dg_1(\mathbf{x})\psi^\lambda(\mathbf{x})d\mathbf{x}-\int_Dg_2(\mathbf{x})\psi^\lambda(\mathbf{x})d\mathbf{x}.
\end{equation}

On the other hand ,
\begin{equation}\label{315}
  \begin{split}
   &\int_Dg_1(\mathbf{x})\psi^\lambda(\mathbf{x})d\mathbf{x}-\int_Dg_2(\mathbf{x})\psi^\lambda(\mathbf{x})d\mathbf{x}
      =\int_{D_1}\psi^\lambda(\mathbf{x})(g_1-g_2)(\mathbf{x})d\mathbf{x}\\
      &+\int_{D_2}\psi^\lambda(\mathbf{x})(g_1-g_2)(\mathbf{x})d\mathbf{x} +\int_{D_3}\psi^\lambda(\mathbf{x})(g_1-g_2)(\mathbf{x})d\mathbf{x}+\int_{D_4}\psi^\lambda(\mathbf{x})(g_1-g_2)(\mathbf{x})d\mathbf{x},
  \end{split}
\end{equation}
where \[D_1=B_1\cap\{x_1>0\}, D_2=B_1\cap\{x_1<0\},\]
 \[ D_3=B_2\cap\{x_1<0\}, D_4=B_2\cap\{x_1>0\}.\]
Since $\psi^\lambda$ is also even in $x_1$ and odd in $x_2$(this can be proved directly using the symmetry of the Green's function), we have
\begin{equation}\label{316}
\begin{split}
  &\int_{D_1}\psi^\lambda(\mathbf{x})(g_1-g_2)(\mathbf{x})d\mathbf{x}=\int_{D_2}\psi^\lambda(\mathbf{x})(g_1-g_2)(\mathbf{x})d\mathbf{x} \\
  =&\int_{D_3}\psi^\lambda(\mathbf{x})(g_1-g_2)(\mathbf{x})d\mathbf{x}=\int_{D_4}\psi^\lambda(\mathbf{x})(g_1-g_2)(\mathbf{x})d\mathbf{x}.
  \end{split}
\end{equation}

Combining \eqref{314}, \eqref{315} and \eqref{316} we conclude that
\begin{equation}\label{317}
\int_{D_1}\psi^\lambda(\mathbf{x})g_1(\mathbf{x})d\mathbf{x}\leq \int_{D_1}\psi^\lambda(\mathbf{x})g_2(\mathbf{x})d\mathbf{x},
\end{equation}
or equivalently
\begin{equation}\label{318}
\int_{D_1}\psi^\lambda(\mathbf{x})\alpha(\mathbf{x})d\mathbf{x}\leq \int_{D_1}\psi^\lambda(\mathbf{x})\beta(\mathbf{x})d\mathbf{x}.
\end{equation}

By the choice of $\alpha$ and $\beta$, inequality \eqref{317} holds if and only if

\begin{equation}\label{319}
\sup_{\{\omega^\lambda<\lambda\}\cap D_1}\psi^\lambda\leq \inf_{\{\omega^\lambda>0\}\cap D_1}\psi^\lambda.
\end{equation}
Combining the continuity of $\psi^\lambda$ in $\{\omega^\lambda<\lambda\}\cap D_1$, we obtain
\begin{equation}\label{320}
\sup_{\{\omega^\lambda<\lambda\}\cap D_1}\psi^\lambda= \inf_{\{\omega^\lambda>0\}\cap D_1}\psi^\lambda.
\end{equation}

Now we define $\mu^\lambda$ as follows

\begin{equation}\label{321}
\mu^\lambda:=\sup_{\{\omega^\lambda<\lambda\}\cap D_1}\psi^\lambda= \inf_{\{\omega^\lambda>0\}\cap D_1}\psi^\lambda.
\end{equation}
It is easy to see that
\begin{equation}
\begin{cases}
\omega^\lambda=0\text{\,\,\,\,\,\,$$\,} \text{a.e. in}\text{\,\,}\{\psi^\lambda<\mu^\lambda\}\cap D_1,
 \\ \omega^\lambda=\lambda\text{\,\,\,\,\,\,$$\,} \text{a.e. in}\text{\,\,}\{\psi^\lambda>\mu^\lambda\}\cap D_1.
\end{cases}
\end{equation}
On $\{\psi^\lambda_1=\mu^\lambda\}\cap D_1$, $\psi^\lambda$ is a constant, so we have $\nabla\psi^\lambda=0$,  then $\omega^\lambda=-\Delta \psi^\lambda =0$.
Hence we conclude that
\begin{equation}
\begin{cases}
\omega^\lambda=0\text{\,\,\,\,\,\,$$\,} \text{a.e. in}\text{\,\,}\{\psi^\lambda\leq\mu^\lambda\}\cap D_1,
 \\ \omega^\lambda=\lambda\text{\,\,\,\,\,\,$$\,} \text{a.e. in}\text{\,\,}\{\psi^\lambda>\mu^\lambda\}\cap D_1.
\end{cases}
\end{equation}

Finally by the symmetry of $\omega^\lambda$ and $\psi^\lambda$ we get the desired result.

\end{proof}

\subsection{Asymptotic behavior of $\omega^\lambda$ as $\lambda\rightarrow+\infty$}
Now we give some asymptotic estimates on $\omega^\lambda$ as $\lambda\rightarrow +\infty$. We will use $C$ to denote various various numbers not depending on $\lambda$.

\begin{lemma}\label{322}
Let $\varepsilon=\sqrt{\frac{\kappa}{\lambda\pi}}$. Suppose that $\omega^\lambda$ is obtained as in Lemma \ref{33}. Then
\begin{enumerate}
\item $E(\omega^\lambda)\geq-\frac{1}{2\pi}\kappa^2\ln\varepsilon-C$;
\item $\mu^\lambda\geq-\frac{1}{2\pi}\kappa\ln\varepsilon-C;$
\item there exists a positive number $R>1$, not depending on $\lambda$, such that $diam(supp(\omega^\lambda_i))<R\varepsilon$ for $i=1,2$;
\item $\frac{1}{\kappa}\int_D\mathbf{x}\omega^\lambda_1(\mathbf{x})d\mathbf{x}\rightarrow \mathbf{x}_1$, $-\frac{1}{\kappa}\int_D\mathbf{x}\omega^\lambda_2(\mathbf{x})d\mathbf{x}\rightarrow \mathbf{x}_2$, as $\lambda \rightarrow +\infty$, where $\mathbf{x}_1,\mathbf{x}_2$ satisfies
    $\mathbf{x}_1\in\overline{B_1},\mathbf{x}_2\in\overline{B_2}$ and $H_2(\mathbf{x}_1,\mathbf{x}_2)=\min_{(\mathbf{x},\mathbf{y})\in D\times D}H_2(\mathbf{x},\mathbf{y})$.

\end{enumerate}
\end{lemma}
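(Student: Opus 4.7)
I would prove (1)--(4) in the stated order: (1) gives the energy lower bound, (1) drives (2) through a gradient/excess identity, (1)--(2) together yield the quantitative concentration (3), and (3) permits the Taylor expansion underlying (4).

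For (1), I test $E$ against the explicit competitor $\tilde\omega:=\lambda I_{B_\varepsilon(P_1)}-\lambda I_{B_\varepsilon(P_2)}$, which lies in $K^\lambda$ for $\lambda$ large since $B_\varepsilon(P_i)\subset B_i$ and the symmetry/mass constraints are immediate. The Newtonian self-interaction of $\lambda I_{B_\varepsilon(P_i)}$ gives the classical $-\frac{\kappa^2}{4\pi}\ln\varepsilon+O(1)$; the bounded regular part $h$ and the cross energy (supports uniformly separated, $G$ bounded on $\overline{B_1}\times\overline{B_2}$) contribute only $O(1)$. Summing and invoking maximality gives (1). For (2), abbreviate $A_i:=\mathrm{supp}(\omega_i^\lambda)$. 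The profile of Lemma \ref{312} yields the identity
\begin{equation*}
2E(\omega^\lambda)-2\mu^\lambda\kappa=\int_{A_1}\lambda(\psi^\lambda-\mu^\lambda)\,dx+\int_{A_2}\lambda(-\psi^\lambda-\mu^\lambda)\,dx=:\mathcal{E}_\lambda\geq 0.
\end{equation*}
Since $\psi^\lambda-\mu^\lambda$ vanishes on $\partial A_1$ by continuity of $\psi^\lambda$ and $-\Delta(\psi^\lambda-\mu^\lambda)=\lambda$ on $A_1$, Green's identity turns each summand into $\int_{A_i}|\nabla\psi^\lambda|^2\,dx$. Hardy--Littlewood rearrangement of $y\mapsto|x-y|^{-1}$ on $A_1$ to the ball $B_\varepsilon(x)$ gives the pointwise bound $|\nabla\psi^\lambda(x)|\leq\lambda\varepsilon+C\kappa$ on $A_1\cup A_2$; squaring and integrating over area $2\pi\varepsilon^2$ produces $\mathcal E_\lambda\leq C\kappa^2$. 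Combined with (1), $\mu^\lambda=E(\omega^\lambda)/\kappa-\mathcal E_\lambda/(2\kappa)\geq-\tfrac{\kappa}{2\pi}\ln\varepsilon-C$.

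For (3), fix any $x^*\in A_1$. Since $\psi^\lambda(x^*)\geq\mu^\lambda$, and both $\lambda\int_{A_1}h(x^*,y)\,dy$ and $\int_{A_2}G(x^*,y)\omega_2^\lambda\,dy$ are $O(\kappa)$ (by boundedness of $h$ and $G$ on the relevant compact sets), item (2) forces
\begin{equation*}
\int_{A_1}\ln\tfrac{1}{|x^*-y|}\,dy\geq\pi\varepsilon^2\ln(1/\varepsilon)-2\pi C/\lambda.
\end{equation*}
Conversely, if a fraction $\alpha$ of $A_1$ lies outside $B_{R\varepsilon}(x^*)$, bounding the exterior piece by $\alpha\pi\varepsilon^2\ln(1/(R\varepsilon))$ and applying Hardy--Littlewood to the interior piece (compared with a ball of area $(1-\alpha)\pi\varepsilon^2$ about $x^*$) gives the upper bound
\begin{equation*}
\int_{A_1}\ln\tfrac{1}{|x^*-y|}\,dy\leq\pi\varepsilon^2\ln(1/\varepsilon)+\pi\varepsilon^2\bigl[(1-\alpha)\bigl(\tfrac12+\tfrac12\ln\tfrac{1}{1-\alpha}\bigr)-\alpha\ln R\bigr].
\end{equation*}
Fixing $\alpha=1/4$ and choosing $R=R_0$ (depending only on $\kappa$) so large that the bracket is strictly less than $-2C/\kappa$ contradicts the lower bound unless $|A_1\setminus B_{R_0\varepsilon}(x^*)|<|A_1|/4$ for every $x^*\in A_1$. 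Applied at the two endpoints of a diameter chord of $A_1$, the two $B_{R_0\varepsilon}$-balls must then overlap, giving $\mathrm{diam}(A_1)<2R_0\varepsilon$; the argument for $A_2$ is identical by symmetry.

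For (4), let $\mathbf{x}_i^\lambda$ denote the appropriately signed centroids. The uniform diameter bound from (3) justifies Taylor-expanding $G$ and $h$ about $(\mathbf{x}_1^\lambda,\mathbf{x}_2^\lambda)$ inside each patch, producing the asymptotic
\begin{equation*}
E(\omega^\lambda)=-\tfrac{\kappa^2}{2\pi}\ln\varepsilon-\tfrac12 H_2(\mathbf{x}_1^\lambda,\mathbf{x}_2^\lambda)+o(1),\qquad\lambda\to\infty,
\end{equation*}
and the analogous expansion applied to the competitor $\tilde\omega$ of (1) gives $E(\tilde\omega)=-\tfrac{\kappa^2}{2\pi}\ln\varepsilon-\tfrac12 H_2(P_1,P_2)+o(1)$. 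Maximality therefore yields $H_2(\mathbf{x}_1^\lambda,\mathbf{x}_2^\lambda)\leq H_2(P_1,P_2)+o(1)=\min H_2+o(1)$; compactness of $\overline{B_1}\times\overline{B_2}$ supplies subsequential limits $(\mathbf{x}_1,\mathbf{x}_2)$ with $H_2(\mathbf{x}_1,\mathbf{x}_2)=\min H_2$. The delicate step is (3): one has to upgrade a merely averaged lower bound on a logarithmic potential into a uniform-in-$\lambda$ concentration estimate, which forces the quantitative rearrangement comparison above rather than a soft compactness argument, and carefully tracking that every constant is independent of $\lambda$ is what allows the bootstrap (1)$\Rightarrow$(2)$\Rightarrow$(3)$\Rightarrow$(4) to close.
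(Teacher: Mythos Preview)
Your sketch follows precisely the Turkington--Elcrat--Miller scheme that the paper invokes by deferring to \cite{CW}: competitor for (1), the energy--multiplier identity plus gradient bound for (2), the quantitative rearrangement/log-potential comparison for (3), and the Taylor expansion of $G$ and $h$ about the centroids for (4). Both the structure and the individual arguments match the standard proof; the paper itself gives no independent argument.

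One point deserves care in your step (2). You assert that $\psi^\lambda-\mu^\lambda$ vanishes on $\partial A_1$ and then apply Green's identity on $A_1$. But at this stage $A_1=\{\psi^\lambda>\mu^\lambda\}\cap B_1$, and nothing yet prevents $A_1$ from touching $\partial B_1$; on that portion of $\partial A_1$ one may have $\psi^\lambda>\mu^\lambda$, so the boundary term need not vanish. The clean formulation is to set $\zeta=(\psi^\lambda-\mu^\lambda)_+\in H^1_0(D^+)$ (using $\mu^\lambda\ge 0$, which follows from $\psi^\lambda\ge 0$ in $D^+$) and obtain
\[
\tfrac12\,\mathcal E_\lambda=\lambda\int_{A_1}\zeta=\int_{D^+}|\nabla\zeta|^2=\int_{\{\psi^\lambda>\mu^\lambda\}}|\nabla\psi^\lambda|^2.
\]
Your pointwise bound $|\nabla\psi^\lambda|\le \lambda\varepsilon+C\kappa$ is valid on all of $D$, but the integration is now over the full super-level set $\{\psi^\lambda>\mu^\lambda\}$, whose measure is not yet known to equal $|A_1|=\pi\varepsilon^2$; your ``area $2\pi\varepsilon^2$'' step therefore needs an extra word. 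In \cite{CW} this is handled (and once (2)--(4) are in hand one recovers $\{\psi^\lambda>\mu^\lambda\}\subset B_1$ a posteriori), so the gap is minor and does not affect the overall strategy, but as written the Green's identity line is not quite justified.
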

\begin{proof}
The proof is exactly the same as the one in \cite{CW}, we omit it here therefore.
\end{proof}

\section{Proof for Theorem \ref{144}}
In this section we prove Theorem \ref{144}. The key point is to show that the maximizer $\omega^\lambda$ obtained in Lemma \ref{33} satisfies the condition in Lemma A.

\begin{proof}[Proof of Theorem \ref{144}]
First, we show that the support of $\omega^\lambda_i$ shrinks to $P_i$ for $i=1,2$. By (3) and (4) of Lemma \ref{322}, $supp(\omega^\lambda_i)$ shrinks to $\mathbf{x}_i$, where $(\mathbf{x}_1,\mathbf{x}_2)$ is a minimum point of $H_2$ in $D\times D$. Then by Proposition \ref{80} in the Appendix, there exists some $\theta\in[0,2\pi)$ such that
\[\mathbf{x}_1=\sqrt{\sqrt{5}-2}(cos\theta,sin\theta), \mathbf{x}_2=-\sqrt{\sqrt{5}-2}(cos\theta,sin\theta).\]
But by the symmetry of $\omega^\lambda$(see the definition of $K^\lambda$), we have $cos\theta=0$, $sin\theta>0$, so $\theta=\frac{\pi}{2}$. That is,
\begin{equation}\label{1001}
\mathbf{x}_1=(0,\sqrt{\sqrt{5}-2}), \mathbf{x}_2=(0,-\sqrt{\sqrt{5}-2}).
\end{equation}

Second, we show that $\omega^\lambda$ is a weak solution of \eqref{13}. To begin with, we show that
\begin{equation}\label{1002}
|\psi^\lambda|\leq C \text{\,\,on\,\,}\partial{B_1}\cup\partial{B_2},
\end{equation}
where $C$ is a positive number not depending on $\lambda$.
In fact, by $(3)(4)$ of Lemma \ref{322} and \eqref{1001}, we have
\begin{equation}\label{1003}
  dist(supp\omega^\lambda_1,\partial{B_i})>\delta_0, i=1,2,
\end{equation}
for some $\delta_0\in(0,1)$ not depending on $\lambda$ if $\lambda$ is sufficiently large. Then for $\mathbf{x}\in\partial B_1$,
\begin{equation}\label{1004}
\begin{split}
  |\psi^\lambda(\mathbf{x})|&=|\int_DG(\mathbf{x},\mathbf{y})\omega^\lambda(\mathbf{y})d\mathbf{y}|\\
  &=|\int_DG(\mathbf{x},\mathbf{y})\omega_1^\lambda(\mathbf{y})d\mathbf{y}+\int_DG(\mathbf{x},\mathbf{y})\omega_2^\lambda(\mathbf{y})d\mathbf{y}|\\
  &=|\int_D-\frac{1}{2\pi}\ln|\mathbf{x}-\mathbf{y}|\omega_1^\lambda(\mathbf{y})d\mathbf{y}-\int_Dh(\mathbf{x},\mathbf{y})\omega_1^\lambda(\mathbf{y})d\mathbf{y}
  +\int_DG(\mathbf{x},\mathbf{y})\omega_2^\lambda(\mathbf{y})d\mathbf{y}|\\
  &\leq |-\frac{1}{2\pi}\ln\delta_0\int_D\omega_1^\lambda(\mathbf{y})d\mathbf{y}|+\int_{supp(\omega^\lambda_1)}|h(\mathbf{x},\mathbf{y})|\omega_1^\lambda(\mathbf{y})d\mathbf{y}
  +\int_{supp(\omega^\lambda_2)}|G(\mathbf{x},\mathbf{y})\omega_2^\lambda(\mathbf{y})|d\mathbf{y}\\
  &\leq -\frac{\kappa}{2\pi}\ln\delta_0+C.
  \end{split}
\end{equation}
Here we used the fact that $|h|\leq C$ on $\partial B_1\times supp(\omega^\lambda_1)$ and $|G|\leq C$ on $\partial B_1\times supp(\omega^\lambda_2)$. Similarly $|\psi^\lambda|\leq C$ on $\partial B_2$. So \eqref{1002} is proved.

 By Lemma \ref{312}, $\omega^\lambda$ has the form

\begin{equation}\label{41}
\omega^\lambda=\lambda I_{\{\psi^\lambda>\mu^\lambda\}\cap B_1}-\lambda I_{\{\psi^\lambda<-\mu^\lambda\}\cap B_2}.
\end{equation}

Since $|\psi^\lambda|\leq C$ on $\partial B_1\cup \partial B_2$, by the maximum principle we know that
\begin{equation}\label{42}
  \psi^\lambda\leq C \text{\,\,in\,\,} D\setminus B_1, \psi^\lambda\geq -C \text{\,\,in\,\,} D\setminus B_2.
\end{equation}

On the other hand, by $(2)$ of Lemma \ref{322} we have $\lim_{\lambda\rightarrow+\infty}\mu^\lambda=+\infty$, so combining \eqref{42} we get
\begin{equation}\label{43}
 \{\psi^\lambda>\mu^\lambda\}\cap B_1=\{\psi^\lambda>\mu^\lambda\}, \{\psi^\lambda<-\mu^\lambda\}\cap B_2=\{\psi^\lambda<-\mu^\lambda\}
\end{equation}
provided $\lambda$ is sufficiently large.
So in fact $\omega^\lambda$ has the form

\begin{equation}\label{44}
\omega^\lambda=\lambda I_{\{\psi^\lambda>\mu^\lambda\}}-\lambda I_{\{\psi^\lambda<-\mu^\lambda\}},
\end{equation}
or equivalently,
\begin{equation}\label{45}
\omega^\lambda=f(\psi^\lambda),
\end{equation}
where $f:\mathbb{R}\rightarrow\mathbb{R}$ is an non-decreasing function defined by
\begin{equation}\label{46}
f(t)=\left\{
\begin{aligned}
&\lambda, \quad  t>\mu^\lambda,\\
&0,  \quad  t\in[-\mu^\lambda,\mu^\lambda],\\
-&\lambda,\quad  t<-\mu^\lambda.
\end{aligned}
\right.
\end{equation}
Then by Lemma A, $\omega^\lambda$ is a weak solution of \eqref{13}.

Last, combining the other properties of $\omega^\lambda$ obtained in Section 2 we finish the proof of Theorem \ref{144}.
\end{proof}

\section{Non-symmetric Case}
In this section, we briefly discuss the existence of steady non-symmetric vortex patch.
\begin{theorem}\label{51}
Let $\kappa_1,\kappa_2$ be two real numbers such that $\kappa_1>0,\kappa_2<0$. Then there exists a $\lambda_0>0$ such that for any $\lambda>\lambda_0$, there exists $\omega^\lambda\in L^\infty(D)$ satisfying
\begin{enumerate}
\item $\omega^\lambda=\omega^\lambda_1+\omega^\lambda_2,$ where $\omega^\lambda_1=\lambda I_{\{\psi^\lambda>\mu^\lambda_1\}}$ and $\omega^\lambda_2=-\lambda I_{\{\psi^\lambda<-\mu^\lambda_2\}}$ for some $\mu^\lambda_1,\mu^\lambda_2\in\mathbb{R}^+$ depending on $\lambda$, and $\psi^\lambda=G\omega^\lambda;$
\item $\int_D\omega^\lambda_1(\mathbf{x})d\mathbf{x}=\kappa_1,$ $\int_D\omega^\lambda_2(\mathbf{x})d\mathbf{x}=\kappa_2;$
\item $\omega^\lambda$ is even in $x_1$, that is, $\omega^\lambda(\mathbf{x})=\omega^\lambda(\bar{\mathbf{{x}}})$;

\item $diam(supp(\omega^\lambda_1)), diam(supp(\omega_2^\lambda))\leq C\lambda^{-\frac{1}{2}}$, where $C$ is a positive number not depending on $\lambda;$
\item $\lim_{\lambda\rightarrow+\infty}|\frac{1}{\kappa_1}\int_D\mathbf{x}\omega^\lambda_1(\mathbf{x})d\mathbf{x}-P|=0$, $\lim_{\lambda\rightarrow+\infty}|\frac{1}{\kappa_2}\int_D\mathbf{x}\omega^\lambda_2(\mathbf{x})d\mathbf{x}-Q|=0$, where $P=(0,p),Q=(0,q)$ are two points in $D$ with $p>0,q<0$ depending only on $\frac{\kappa_2}{\kappa_1}$.
\end{enumerate}

\end{theorem}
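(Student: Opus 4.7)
The proof follows the template from Sections 3--4, with modifications to accommodate the unequal strengths $\kappa_1, -\kappa_2$ and the weaker symmetry (even in $x_1$ only). The plan is first to identify the concentration points as a minimum of the restricted Kirchhoff--Routh function $\widetilde{H}(p,q) := H_2((0,p),(0,q))$ over $\{(p,q) \in (-1,1)^2 : p \neq q\}$, where
\[ H_2(\mathbf{x}, \mathbf{y}) = -2\kappa_1\kappa_2 G(\mathbf{x}, \mathbf{y}) + \kappa_1^2 h(\mathbf{x}, \mathbf{x}) + \kappa_2^2 h(\mathbf{y}, \mathbf{y}). \]
A short analysis (analogous to Proposition \ref{80}) shows $\widetilde{H}$ blows up on the boundary of its domain and hence attains a minimum at some $(p^*, q^*)$ with $p^* > 0 > q^*$ depending only on $\kappa_2/\kappa_1$; set $P := (0, p^*)$, $Q := (0, q^*)$ and pick $\delta > 0$ so that $B_1 := B_\delta(P)$ and $B_2 := B_\delta(Q)$ are disjoint and compactly contained in $D$. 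Then I introduce the admissible class
\[ K^\lambda := \bigl\{ \omega = \omega_1 + \omega_2 \in L^\infty(D) : \mathrm{supp}(\omega_i) \subset B_i,\ \textstyle\int_D \omega_i = \kappa_i,\ 0 \leq \omega_1 \leq \lambda,\ -\lambda \leq \omega_2 \leq 0,\ \omega(\mathbf{x}) = \omega(\bar{\mathbf{x}}) \bigr\}. \]

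Existence of a maximizer $\omega^\lambda = \omega_1^\lambda + \omega_2^\lambda$ of $E$ on $K^\lambda$ follows line-by-line from Lemma \ref{33}, noting that the even-in-$x_1$ condition passes to weak$^*$ limits via test functions odd in $x_1$, while the support, mass, and pointwise bound conditions pass as in Step 2 there. For the profile I run the rearrangement argument of Lemma \ref{312} separately for $\omega_1^\lambda$ and $\omega_2^\lambda$: varying only $\omega_1^\lambda$ through admissible perturbations $\alpha, \beta$ supported in $B_1 \cap \{x_1 > 0\}$ with equal mass and reflected evenly in $x_1$ produces a Lagrange multiplier $\mu_1^\lambda \in \mathbb{R}$ with $\omega_1^\lambda = \lambda I_{\{\psi^\lambda > \mu_1^\lambda\} \cap B_1}$; an analogous variation of $\omega_2^\lambda$ under the constraint $-\lambda \leq \omega_2 \leq 0$ produces $\mu_2^\lambda \in \mathbb{R}$ with $\omega_2^\lambda = -\lambda I_{\{\psi^\lambda < -\mu_2^\lambda\} \cap B_2}$. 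The asymptotic estimates of Lemma \ref{322} carry over with essentially identical proofs, as they act componentwise on the two patches: one obtains $E(\omega^\lambda) \geq -\frac{1}{4\pi}(\kappa_1^2 + \kappa_2^2)\ln\varepsilon - C$ with $\varepsilon = \lambda^{-1/2}$, $\mu_i^\lambda \geq -\frac{|\kappa_i|}{2\pi}\ln\varepsilon - C$, $\mathrm{diam}(\mathrm{supp}\,\omega_i^\lambda) \leq C\varepsilon$, and convergence of the rescaled centers $\tfrac{1}{\kappa_i}\int_D \mathbf{x}\,\omega_i^\lambda(\mathbf{x})\,d\mathbf{x}$ to a minimizer of $H_2$ over $\overline{B_1} \times \overline{B_2}$, which by our choice of $B_1, B_2$ must coincide with $(P, Q)$.

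To conclude via Lemma A that $\omega^\lambda$ is a weak solution of \eqref{13}, I will repeat the endgame of the proof of Theorem \ref{144}: the diameter estimate together with the local integrability of $G$ yields $|\psi^\lambda| \leq C$ on $\partial B_1 \cup \partial B_2$, then the maximum principle gives $\psi^\lambda \leq C$ on $D \setminus B_1$ and $\psi^\lambda \geq -C$ on $D \setminus B_2$; combined with $\mu_i^\lambda \to +\infty$ this forces, for $\lambda$ large,
\[ \{\psi^\lambda > \mu_1^\lambda\} \subset B_1, \qquad \{\psi^\lambda < -\mu_2^\lambda\} \subset B_2, \]
so that $\omega^\lambda = f(\psi^\lambda)$ for the nondecreasing step function equal to $-\lambda$ on $(-\infty, -\mu_2^\lambda)$, $0$ on $[-\mu_2^\lambda, \mu_1^\lambda]$, and $\lambda$ on $(\mu_1^\lambda, +\infty)$; Lemma A then gives the weak-solution property. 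The main obstacle is the very first step: showing that $\widetilde{H}$ has a (unique) minimizer of the claimed form with $p^* > 0 > q^*$ depending only on $\kappa_2/\kappa_1$ requires explicit partial-derivative calculations using the formulas for $G$ and $h$ in the disk, since unlike the symmetric case $\kappa_1 = -\kappa_2$ no antipodal relation $q^* = -p^*$ is available and one must rule out degenerate or non-axis minimizers by hand.
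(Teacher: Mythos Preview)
Your proposal is correct and follows essentially the same route as the paper: choose $(P,Q)$ via Proposition~\ref{80}, maximize $E$ over the class $M^\lambda$ (your $K^\lambda$), and rerun Lemmas~\ref{33}, \ref{312}, \ref{322} and the endgame of Theorem~\ref{144} componentwise. Two minor remarks: the ``main obstacle'' you flag at the end is already handled in the paper by Case~2 of Proposition~\ref{80}, which treats arbitrary $\gamma=-\kappa_2/\kappa_1$ and shows that the global minimizers of $H_2$ on $D\times D$ lie on a diameter with a unique pair of radii---so you should take $(P,Q)$ to be a global minimizer of $H_2$ (not merely of the axis-restricted $\widetilde H$), since the analogue of Lemma~\ref{322}(4) produces a \emph{global} minimizer as the limiting center pair; and note that Theorem~\ref{51} as stated does not assert the weak-solution property, so your Lemma~A step is needed only to upgrade $\omega_1^\lambda=\lambda I_{\{\psi^\lambda>\mu_1^\lambda\}\cap B_1}$ to $\lambda I_{\{\psi^\lambda>\mu_1^\lambda\}}$ in item~(1).
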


\begin{proof}
The construction of $\omega^\lambda$ here is similar to the one of the symmetric case in Section 2. For simplicity we only sketch the proof.

First we choose $(P,Q)$ as a minimum point of the corresponding Kirchhoff-Routh function

\begin{equation}\label{52}
H_2(\mathbf{x},\mathbf{y}):=-2\kappa_1\kappa_2G(\mathbf{x},\mathbf{y})+{\kappa_1}^2h(\mathbf{x},\mathbf{x})+{\kappa_2}^2h(\mathbf{y},\mathbf{y}),\,\,\mathbf{x},\mathbf{y}\in D, \mathbf{x}\neq\mathbf{y}.
\end{equation}
Let $P=(0,p),Q=(0,q)$ with $p>0,q<0$. In this case, $P,Q$ are uniquely determined by $\frac{\kappa_2}{\kappa_1}$(see Proposition \ref{80} in the Appendix). Now we choose $\delta>0$ sufficiently small such that $B_{\delta}(P),B_{\delta}(Q)\subset\subset D$ and $\overline{B_{\delta}(P)}\cap \overline{B_{\delta}(Q)}=\varnothing$.

Consider the maximization of $E$ on the following class

\begin{equation}\label{53}
\begin{split}
M^\lambda:=\big{\{}\omega\in L^\infty(D)\,\mid\,\,\omega=\omega_1+\omega_2, supp(\omega_1)\subset B_\delta(P), supp(\omega_2)\subset B_\delta(Q),\\ \int_D\omega_i(\mathbf{x})d\mathbf{x}=\kappa_i,
0\leq sgn(\kappa_i)\omega_i\leq\lambda, \text{\,\,for\,\,}i=1,2, \,\omega(\mathbf{x})=\omega(\bar{\mathbf{x}})\text{\,\,for\,\,} x\in D
 \big{\}}.
\end{split}
\end{equation}

Then by repeating the procedures in Section 2, we can prove that there exists a maximizer $\omega^\lambda$ and this maximizer satisfies (1)-(5) in Theorem \ref{51} if $\lambda$ is sufficiently large.

\end{proof}

\appendix
\section{Minimum Points of $H_2$}
In this appendix we give location of the minimum points of the function $H_2$.

\begin{proposition}\label{80}
Let $D=\{\mathbf{x}\in\mathbb{R}^2\mid |x|<1\}$, $\kappa_1>0,\kappa_2<0$ be two real numbers, and $\gamma=-\frac{\kappa_2}{\kappa_1}$. Denote $M$ the set of all minimum points of $H_2$, where
\begin{equation}\label{61}
H_2(\mathbf{x},\mathbf{y}):=-2\kappa_1\kappa_2G(\mathbf{x},\mathbf{y})+{\kappa_1}^2h(\mathbf{x},\mathbf{x})+{\kappa_2}^2h(\mathbf{y},\mathbf{y}),\,\,\mathbf{x},\mathbf{y}\in D, \mathbf{x}\neq\mathbf{y}.
\end{equation}
Then there exists $p\in(0,1),q\in(-1,0)$ depending only on $\gamma$, such that
\begin{equation}\label{62}
M=\{(P,Q)\in D\times D\mid P=p(cos\theta,\sin\theta), Q=q(cos\theta,sin\theta), \theta\in[0,2\pi)\}.
\end{equation}
If $\gamma=1$, then
\[p=-q=\sqrt{\sqrt{5}-2}.\]

\end{proposition}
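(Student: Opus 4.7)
The plan is to exploit the rotational invariance of the Green's function $G$ and of the regular part $h(\cdot,\cdot)$ on the disk, which reduces the problem in stages from three variables to one. Since $G(R_\theta\mathbf{x},R_\theta\mathbf{y})=G(\mathbf{x},\mathbf{y})$ for every rotation $R_\theta$, and since a short calculation from \eqref{5} yields
\begin{equation*}
h(\mathbf{x},\mathbf{x})=-\tfrac{1}{2\pi}\ln(1-|\mathbf{x}|^2),
\end{equation*}
the value of $H_2$ at $(\mathbf{x},\mathbf{y})$ depends only on $r_1=|\mathbf{x}|$, $r_2=|\mathbf{y}|$, and the angle $\alpha$ between $\mathbf{x}$ and $\mathbf{y}$. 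In particular $M$ is invariant under the diagonal $SO(2)$-action, so it suffices to locate one representative of each orbit. Existence of a minimum is immediate from the fact that $H_2\to+\infty$ as either point approaches $\partial D$ (from the $h$-terms) and as $\mathbf{x}\to\mathbf{y}$ (from $G$).

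The first reduction is to show that at any minimum one has $\cos\alpha=-1$, i.e., $\mathbf{x}$ and $\mathbf{y}$ are antipodal through the origin. Writing
\begin{equation*}
G(\mathbf{x},\mathbf{y})=\frac{1}{4\pi}\ln\frac{N}{D},\qquad N=r_1^2r_2^2-2r_1r_2\cos\alpha+1,\quad D=r_1^2+r_2^2-2r_1r_2\cos\alpha,
\end{equation*}
the key identity $N-D=(1-r_1^2)(1-r_2^2)>0$ gives $\partial_{\cos\alpha}\ln(N/D)>0$ for $r_1,r_2\in(0,1)$. Since $-2\kappa_1\kappa_2>0$ and the two $h$-terms are $\alpha$-independent, $H_2$ is strictly increasing in $\cos\alpha$, and the minimum over $\alpha$ is attained at $\cos\alpha=-1$.

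After a rotation I can parametrize $\mathbf{x}=p(\cos\theta,\sin\theta)$, $\mathbf{y}=-s(\cos\theta,\sin\theta)$ with $p,s\in(0,1)$. Substituting $\cos\alpha=-1$ collapses $H_2$ (up to the positive factor $\kappa_1^2/(2\pi)$) to
\begin{equation*}
\Phi(p,s)=2\gamma\ln\frac{1+ps}{p+s}-\ln(1-p^2)-\gamma^2\ln(1-s^2),
\end{equation*}
whose first-order conditions $\partial_p\Phi=\partial_s\Phi=0$ read
\begin{equation*}
\frac{p}{1-p^2}=\frac{\gamma(1-s^2)}{(1+ps)(p+s)},\qquad \frac{\gamma s}{1-s^2}=\frac{1-p^2}{(1+ps)(p+s)}.
\end{equation*}
Dividing the two equations gives the clean relation $p=\gamma^2 s$, which reduces the system to a single scalar equation in $s\in(0,1)$. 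I would establish uniqueness of its root by exhibiting a strictly monotone scalar combination of the two equations along the half-line $\{p=\gamma^2 s\}$; this is the one non-routine point and the main obstacle. Combined with Steps~1--2 and the rotational invariance, this forces $M$ to be exactly the circle of pairs $(p_\ast(\cos\theta,\sin\theta),-s_\ast(\cos\theta,\sin\theta))$ claimed in \eqref{62}. In the special case $\gamma=1$ one has $p=s$, and multiplying the two first-order equations gives $p^4+4p^2-1=0$, whence $p=-q=\sqrt{\sqrt{5}-2}$, as claimed.
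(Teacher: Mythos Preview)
Your argument follows the same route as the paper's: rotational invariance, strict monotonicity of $H_2$ in $\cos\alpha$ (forcing $\cos\alpha=-1$ at any minimum), reduction to a two-variable function $\Phi$ (the paper's $R$, after a logarithm), and first-order conditions. The one place you diverge is in the uniqueness step. The paper writes out the two polynomial critical-point equations $F_1=F_2=0$ and asserts that each $F_i$ is strictly monotone in each variable on $(0,1)^2$, concluding that the system has at most one solution. Your division trick yielding $p=\gamma^2 s$ is cleaner and is not observed in the paper; it reduces uniqueness to a single scalar equation, and along $p=\gamma^2 s$ either first-order condition rearranges to an identity with one side strictly increasing in $s$ and the other strictly decreasing, so the monotonicity you flag as the ``main obstacle'' is in fact routine. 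For $\gamma=1$ the two first-order equations coincide (since $p=s$), so either one alone already yields $p^4+4p^2-1=0$; ``multiplying'' them is harmless but unnecessary.
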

\begin{proof}
First, it is easy to see that
\begin{equation}
  \lim_{|\mathbf{x}-\mathbf{y}|\rightarrow 0}H_2(\mathbf{x},\mathbf{y})=+\infty, \lim_{\mathbf{x}\rightarrow\partial D\text{ or }\mathbf{y}\rightarrow\partial D}H_2(\mathbf{x},\mathbf{y})=+\infty,
\end{equation}
so $H_2$ attains its minimum in $D\times D\setminus\{(\mathbf{x},\mathbf{y})\in D\times D\mid \mathbf{x}\neq\mathbf{y}\}$, or equivalently, $M$ is not empty.

For $\mathbf{x},\mathbf{y}\in D, \mathbf{x}\neq\mathbf{y}$,

\begin{equation}\label{63}
\begin{split}
H_2(\mathbf{x},\mathbf{y})&=-2\kappa_1\kappa_2G(\mathbf{x},\mathbf{y})+{\kappa_1}^2h(\mathbf{x},\mathbf{x})+{\kappa_2}^2h(\mathbf{y},\mathbf{y}) \\
&=-2\kappa_1\kappa_2(-\frac{1}{2\pi}\ln\frac{|\mathbf{x}-\mathbf{y}|}{|\mathbf{y}||\mathbf{x}-\mathbf{y}^*|})-\kappa_1^2(\frac{1}{2\pi}\ln|\mathbf{x}||\mathbf{x}-\mathbf{x}^*|)
-\kappa_2^2(\frac{1}{2\pi}\ln|\mathbf{y}||\mathbf{y}-\mathbf{y}^*|)\\
&=\frac{\kappa_1^2}{\pi}\ln\frac{|\mathbf{y}|^{2\gamma}|\mathbf{x}-\mathbf{y}^*|^{2\gamma}}{|\mathbf{x}-\mathbf{y}|^{2\gamma}|\mathbf{x}||\mathbf{y}|^{\gamma^2}|\mathbf{y}-\mathbf{y}^*|^{\gamma^2}},
\end{split}
\end{equation}
where $\mathbf{x}^*={\mathbf{x}}/{|\mathbf{x}|^2}$ and $\mathbf{y}^*={\mathbf{y}}/{|\mathbf{y}|^2}$.

So it suffices to consider the minimum points of $T(\mathbf{x},\mathbf{y})$ defined by

\[T(\mathbf{x},\mathbf{y}):=\frac{|\mathbf{y}|^{2\gamma}|\mathbf{x}-\mathbf{y}^*|^{2\gamma}}{|\mathbf{x}-\mathbf{y}|^{2\gamma}|\mathbf{x}||\mathbf{y}|^{\gamma^2}|\mathbf{y}
-\mathbf{y}^*|^{\gamma^2}}.\]

By using the polar coordinates,
\[T(\mathbf{x},\mathbf{y}):=\frac{(1+\rho_1^2\rho_2^2-2\rho_1\rho_2cos(\theta_1-\theta_2))^\gamma}{(\rho_1^2+\rho_2^2-2\rho_1\rho_2cos(\theta_1-\theta_2))^\gamma(1-\rho_1^2)(1-\rho_2^2)^{\gamma^2}},\]
where $\mathbf{x}=\rho_1(cos\theta_1,sin\theta_1)$ and $\mathbf{y}=\rho_2(cos\theta_2,sin\theta_2)$.

Now we show that if $(\mathbf{x},\mathbf{y})$ is a minimum point of $H_2$, then
\[\theta_1-\theta_2=\pi.\]
In fact, it is not hard to check that for fixed $\rho_1,\rho_2$, $T$ is strictly increasing in $cos(\theta_1-\theta_2)$, so at any minimum point we must have
\[cos(\theta_1-\theta_2)=-1.\]

To finish the proof it suffices to consider the minimization of the following function:
\[R(\rho_1,\rho_2):=\frac{(1+\rho_1\rho_2)^{2\gamma}}{(\rho_1+\rho_2)^{2\gamma}(1-\rho_1^2)(1-\rho_2^2)^{\gamma^2}}\]
for $\rho_1,\rho_2\in(0,1)$.

\textbf{Case 1:} $\gamma=1$.
In this simple case, $R(\rho_1,\rho_2)$ becomes
\[R(\rho_1,\rho_2):=\frac{(1+\rho_1\rho_2)^{2}}{(\rho_1+\rho_2)^{2}(1-\rho_1^2)(1-\rho_2^2)},\quad \rho_1,\rho_2\in(0,1).\]
By direct calculation, we obtain
\begin{equation}\label{65}
\partial_{\rho_1}R=0\Leftrightarrow \rho_1\rho_2+\rho_1^2+\rho_1^3\rho_2+2\rho_2^2=1,
\end{equation}
\begin{equation}\label{66}
\partial_{\rho_2}R=0\Leftrightarrow \rho_2\rho_1+\rho_2^2+\rho_2^3\rho_1+2\rho_1^2=1.
\end{equation}
Subtracting the two expressions in \eqref{65} and \eqref{66} we get
\[(1+\rho_1\rho_2)(\rho_1^2-\rho_2^2)=0,\]
which gives $\rho_1=\rho_2.$ Now we can see that $\rho_1$ satisfies
\[\rho_1^4+4\rho_1^2-1=0,\]
so $\rho_1=\rho_2=\sqrt{\sqrt{5}-2}$.

\textbf{Case 2}: $\gamma>0$ is arbitrary. In this case, we show that $R(\rho_1,\rho_2)$ has a unique minimum point for $\rho_1,\rho_2\in(0,1)$. Existence is obvious since we have proved that $M$ is not empty. Now we show the uniqueness. In fact, it suffices to prove that the critical point of $R$ in $(0,1)\times(0,1)$ is unique.

Direct calculation gives
\[\partial_{\rho_1}R=0\Leftrightarrow \rho_1\rho_2+(1+\gamma)\rho_1^2+\gamma\rho_2^2+\rho_1^3\rho_2+(1-\gamma)\rho_1^2\rho_2^2-\gamma=0,\]
\[\partial_{\rho_2}R=0\Leftrightarrow \gamma\rho_1\rho_2+\rho_1^2+\rho_2^2+(\gamma-1)\rho_1^2\rho_2^2+\gamma^2\rho_2^2+\gamma\rho_1\rho_2^3-1=0.\]
For simplicity we write
\[F_1(\rho_1,\rho_2):=\rho_1\rho_2+(1+\gamma)\rho_1^2+\gamma\rho_2^2+\rho_1^3\rho_2+(1-\gamma)\rho_1^2\rho_2^2-\gamma\]
and
\[F_2(\rho_1,\rho_2):=\gamma\rho_1\rho_2+\rho_1^2+\rho_2^2+(\gamma-1)\rho_1^2\rho_2^2+\gamma^2\rho_2^2+\gamma\rho_1\rho_2^3-1.\]
It is not hard to check that $F_1, F_2$ are both strictly monotonic in $\rho_1$ for fixed $\rho_2$ and in $\rho_2$ for fixed $\rho_1$ if $\rho_1,\rho_2\in(0,1)$, which means that the system
$F_1(\rho_1,\rho_2)=0, F_2(\rho_1,\rho_2)=0$ has most one solution. In other words, under the condition $\theta_1-\theta_2=\pi$,  $H_2$ has a unique minimum point, which completes the proof.
\end{proof}

\smallskip

{\bf Acknowledgements:}
{\it D Cao was supported by NNSF of China (grant No. 11331010) and Chinese Academy of Sciences by grant QYZDJ-SSW-SYS021. G Wang was supported by NNSF of China (grant No. 11771469).}

\renewcommand\refname{References}
\renewenvironment{thebibliography}[1]{%
\section*{\refname}
\list{{\arabic{enumi}}}{\def\makelabel##1{\hss{##1}}\topsep=0mm
\parsep=0mm
\partopsep=0mm\itemsep=0mm
\labelsep=1ex\itemindent=0mm
\settowidth\labelwidth{\small[#1]}%
\leftmargin\labelwidth \advance\leftmargin\labelsep
\advance\leftmargin -\itemindent
\usecounter{enumi}}\small
\def\newblock{\ }
\sloppy\clubpenalty4000\widowpenalty4000
\sfcode`\.=1000\relax}{\endlist}
\bibliographystyle{model1b-num-names}

\end{document}